\documentclass[11pt]{amsart}
\usepackage{pxfonts}
\usepackage[bookmarks=false]{hyperref}
\usepackage{amsmath}
\frenchspacing
\newtheorem{theorem}{Theorem}[section]
\newtheorem{proposition}{Proposition}[section]
\newtheorem{lemma}{Lemma}[section]

\theoremstyle{definition}
\newtheorem*{definition}{Definition}{}

\theoremstyle{remark}

\newcommand{\spbp}{\mathcal{\sqrt{-1}\partial \bar{\partial}}}

\numberwithin{equation}{section}
\usepackage{hyperref}
\hypersetup{colorlinks,citecolor=blue,plainpages=false,hypertexnames=false}
\begin{document}
\title[Coupled K\"ahler-Einstein metrics]{Existence of coupled K\"ahler-Einstein metrics using the continuity method}
\author{Vamsi Pritham Pingali}
\address{Department of Mathematics, Indian Institute of Science, Bangalore, India - 560012}
\email{vamsipingali@math.iisc.ernet.in}
\begin{abstract}
In this paper we prove the existence of coupled K\"ahler-Einstein metrics on complex manifolds whose canonical bundle is ample. These metrics were introduced and their existence in the said case was proven by Hultgren and Nystr\"om using calculus of variations. We prove the result using the method of continuity. In the process of proving estimates, akin to the usual K\"ahler-Einstein metrics, we reduce existence in the Fano case to a $C^0$ estimate.
\end{abstract}
\maketitle
\section{Introduction}
\indent Let $(X,\omega_0)$ be a compact K\"ahler manifold which is either Fano ($c_1(X)>0$) or anti-Fano ($c_1(X)<0$). Consider the following equations (the ``coupled K\"ahler-Einstein equations") on $X$, originally introduced in \cite{David}.
\begin{gather}
\mathrm{Ric}(\omega_1) = \mathrm{Ric}(\omega_2) = \ldots = \pm \displaystyle \sum \omega_i, \label{coupledsys}
\end{gather}
where $\omega_i$ are K\"ahler metrics to be solved for in given K\"ahler classes $[\theta_i]$ satisfying $\displaystyle \pm \sum_i  [\theta_i] = c_1 (X)$. These equations seem vaguely reminiscent of the bimetric theories of gravity (see \cite{bim} and the references therein). \\
\indent It can easily be shown that \ref{coupledsys} is equivalent to the following system of Monge-Amp\`ere PDE if $\omega_0$ satisfies $\mathrm{Ric}(\omega_0)=\pm \displaystyle \sum _i \theta_i$. (This can be arranged using Yau's solution of the Calabi conjecture \cite{Yau}.)
\begin{gather}
(\theta_i+\spbp \phi_i) ^n = C_i e^{\displaystyle \mp \sum _i \phi_i} \omega_0^n 
\label{coupledMA}
\end{gather} 
for smooth functions $\phi_i$ satisfying $\sup \phi_2 = \sup \phi_3 =\ldots = \sup \phi_n = 0$ where $C_i = \frac{\int \theta_i ^n}{\int \omega_0 ^n}$. In \cite{David} the following existence result was proven for anti-Fano $X$.
\begin{theorem}[Hultgren-Nystr\"om]
Let $(X,\omega_0)$ be a compact K\"ahler manifold which is anti-Fano. Let $[\theta_i]$ be K\"ahler classes such that $\displaystyle  \sum _i [\theta_i] =-c_1(X)$. Then there exist unique K\"ahler metrics $\omega_i \in [\theta_i]$ such that \begin{gather}
\mathrm{Ric}(\omega_1) = \mathrm{Ric}(\omega_2) = \ldots = -\displaystyle \sum _i \omega_i
\label{KEeqn}
\end{gather}
\label{mainthm}
\end{theorem}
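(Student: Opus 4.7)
The plan is to run the method of continuity on a one-parameter family of systems interpolating between a solvable reference system at $t=0$ and the desired system \ref{coupledMA} at $t=1$. A convenient family is
\begin{gather*}
(\theta_i + \spbp \phi_i^t)^n = C_i e^{t\sum_j \phi_j^t + (1-t)f_i}\omega_0^n, \qquad i=1,\ldots,k,
\end{gather*}
where the smooth $f_i$ are chosen so that $\phi_i^0 \equiv 0$ solves the $t=0$ equation (take $f_i = \log(\theta_i^n/\omega_0^n) - \log C_i$), and for $t\in(0,1]$ solutions are normalized by $\sup\phi_2^t = \cdots = \sup\phi_k^t = 0$. Define $S \subset [0,1]$ as the set of parameters admitting a smooth normalized solution and show $S$ is nonempty, open, and closed.

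\emph{Openness.} The linearization at a solution reads $L(\psi_1,\ldots,\psi_k) = (\Delta_{\omega_i^t}\psi_i - t\sum_j\psi_j)_i$. The anti-Fano sign $-t \leq 0$ is favorable: pairing each component with $\psi_i$ with respect to $(\omega_i^t)^n$ and summing shows the kernel (modulo normalization) consists of constant tuples with $\sum_j c_j=0$, which the $(k-1)$ normalization conditions eliminate. Fredholm theory on $C^{2,\alpha}$ together with the implicit function theorem then yield openness.

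\emph{Closedness.} The bulk of the work is to derive $t$-uniform a priori bounds on $\phi_i^t$. For the $C^0$ bound, write $\Phi^t := \sum_j\phi_j^t$ so that $\sum_i \omega_i^t = \sum_i \theta_i + \spbp\Phi^t$; at a maximum point of $\Phi^t$ one has $\omega_i^t \leq \sum_j\theta_j$ pointwise, hence $(\omega_i^t)^n \leq C\omega_0^n$, and the equation delivers an upper bound on $\Phi^t$. With $\Phi^t$ bounded above, each equation becomes a scalar Monge-Amp\`ere equation with bounded right-hand side, so Kolodziej's $L^\infty$ estimate bounds each $\phi_i^t$ modulo an additive constant; the normalization fixes the constants for $i\geq 2$, and a minimum-principle analysis of $\phi_1^t$ (where $\omega_1^t \geq \theta_1$ pointwise, combined with the Kolodziej bounds on $\phi_2^t,\ldots,\phi_k^t$) handles $\phi_1^t$. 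For the second-order estimate I would apply Yau's Aubin-Yau computation to $\log\Tr_{\omega_0}\omega_i^t$, but the resulting Bochner-type inequality produces a term $\Delta_{\omega_i^t}\Phi^t$ that expands into cross traces $\Tr_{\omega_i^t}\omega_j^t$ ($j\neq i$), not controlled by a single-equation estimate; these must be absorbed by working with a combined quantity such as $\sum_i\log\Tr_{\omega_0}\omega_i^t$ so that the cross terms cancel or carry the correct sign. Complex Evans-Krylov and Schauder bootstrapping then furnish all higher-order estimates.

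\emph{Uniqueness and main obstacle.} Uniqueness follows by a maximum-principle argument applied to the coupled linear system $\hat\Delta_i\psi_i = \hat c_i\sum_j\psi_j$ satisfied by the difference $\psi_i := \phi_i-\tilde\phi_i$ of two normalized solutions. The step I expect to be hardest is the coupled second-order estimate: because the exponent $\sum_j\phi_j$ couples the $k$ Monge-Amp\`ere equations, the classical single-equation Aubin-Yau trick produces cross traces that must be handled simultaneously for all $i$, and closing this estimate uniformly in $t$ is the principal new technical ingredient beyond the standard anti-Fano K\"ahler-Einstein machinery.
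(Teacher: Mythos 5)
Your overall architecture (continuity path, openness via linearization and Fredholm theory, closedness via a priori estimates, uniqueness via an integration-by-parts argument on the difference of two solutions) matches the paper's, and your $C^0$ step is close in spirit to the paper's (which bounds $\sum_i t_i\phi_i$ from above using the Green's function representation and Jensen's inequality rather than a maximum point of $\Phi^t$, and then invokes Kolodziej/ABP together with the normalization). The genuine gap is the second-order estimate, which you correctly identify as the crux but leave open. In the anti-Fano case the resolution is a sign observation, not a combined quantity: applying Yau's computation to $u_i=e^{-\lambda\phi_i}(n+\Delta_{\theta_i}\phi_i)$, the coupling enters only through the term $\Delta_{\theta_i}\bigl(\sum_{j\neq i}t_j\phi_j\bigr)$, and since each $\omega_j=\theta_j+\spbp\phi_j>0$ one has $\Delta_{\theta_i}\phi_j=\Tr_{\theta_i}(\omega_j-\theta_j)\geq-\Tr_{\theta_i}\theta_j\geq-C$; in the anti-Fano case this term appears with a plus sign, so it is uniformly bounded below and each Laplacian estimate closes by the maximum principle independently of the others. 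A genuinely coupled argument (summing the maxima $M_i$ of the $u_i$ and absorbing with Young's inequality) is what the paper needs in the Fano case, where the sign is unfavorable; your instinct to combine the quantities points in that direction, but you have not exhibited any combined quantity for which the cross traces actually cancel, so as written the estimate is not closed.

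A second, quieter problem is that your correction terms $(1-t)f_i$ break the openness argument as you sketch it. Pairing the kernel equation $\Delta_{\omega_i}\psi_i=t\sum_j\psi_j$ with $\psi_i$ against $(\omega_i)^n$ and summing gives $-\sum_i\int|\nabla\psi_i|_i^2\,(\omega_i)^n=t\sum_i\int\psi_i\bigl(\sum_j\psi_j\bigr)(\omega_i)^n$, and the right-hand side is a perfect square (hence has a sign) only if the volume forms $(\omega_i)^n$ are proportional to a common $dvol$. Along the paper's path the exponent $\sum_j t_j\phi_j$ is identical in every equation, so all the $\omega_i$ have equal Ricci forms and therefore proportional volume forms; the identity becomes $-\int\sum_j t_j|\nabla_j v_j|_j^2\,dvol=\int\bigl(\sum_k t_kv_k\bigr)^2dvol$, which forces $\sum_k t_kv_k=0$, whence each $v_i$ is harmonic, constant, and zero by normalization. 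With your $f_i$ the Ricci forms of the $\omega_i^t$ differ for $t<1$, the cross terms $\int\psi_i\psi_j(\omega_i)^n$ carry no sign, and the kernel computation does not close. The fix is to drop the $f_i$ and use the path $(\theta_i+\spbp\phi_i)^n=C_ie^{t\sum_j\phi_j}\omega_0^n$, accepting that the $t=0$ system is solved by Yau's theorem rather than by $\phi_i\equiv0$.
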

\indent Hultegren and Nystr\"om  proved theorem \ref{mainthm} using calculus of variations. In this paper we prove this theorem using the method of continuity. To do this we establish the following \emph{a priori} estimates.
\begin{theorem}
Let $(X,\omega_0)$ be a compact K\"ahler manifold that is either Fano or anti-Fano such that $\omega_0$ satisfies $\mathrm{Ric}(\omega_0)=\pm \displaystyle \sum _i \theta_i$ where $\theta_i$ are K\"ahler forms such that $\displaystyle \pm \sum _i [\theta_i] =c_1(X)$. Let $\phi_i$ be a smooth solution of the following system of equations.
\begin{gather}
(\theta_i+\spbp \phi_i) ^n = C_i e^{\displaystyle \mp \sum _i t_i \phi_i} \omega_0^n 
\label{paracoupledMA}
\end{gather} 
where $C_i = \frac{\int \theta_i ^n}{\int \omega_0 ^n}$ and $0\leq t_i \leq 1$. 
\begin{enumerate}
\item If $X$ is anti-Fano then $\Vert \phi_i \Vert _{C^{2,\alpha}} \leq C$ where $C$ is bounded uniformly.
\item If $X$ is Fano then $\Vert \phi_i \Vert _{C^{2,\alpha}} \leq C$ where $C$ depends on $\Vert \phi_1 \Vert _{C^0}$. 
\end{enumerate} 
\label{estimatesthm}
\end{theorem}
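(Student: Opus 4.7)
The plan is to adapt the classical Aubin--Yau template ($C^0\to C^2\to C^{2,\alpha}$) for K\"ahler--Einstein metrics to the coupled setting, the new feature being the cross-coupling of the $N$ Monge--Amp\`ere equations through their exponential right-hand sides.

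\textbf{Step 1: $C^0$ bound.} The natural intermediate quantity is the combined potential $\Psi=\sum_j t_j\phi_j$, which is what appears in the exponent of (\ref{paracoupledMA}); once $\|\Psi\|_{C^0}$ is controlled, each individual equation has right-hand side bounded between two positive constants, and $\|\phi_i\|_{C^0}\le C$ follows from the normalisation $\sup\phi_j=0$ for $j\ge 2$, the integral identity $\int e^{\mp\Psi}\omega_0^n=\int\omega_0^n$, and Ko\l odziej's $L^\infty$ estimate applied to each equation. To bound $\Psi$ itself in the anti-Fano case, I will argue as follows. At a point where $\Psi$ attains its maximum, $\sqrt{-1}\partial\bar\partial\Psi\le 0$ gives the matrix inequality $\sum_j t_j\omega_j\le\sum_j t_j\theta_j$, and Minkowski's determinant inequality
\[
\det\Bigl(\sum_j t_j\omega_j\Bigr)^{1/n}\ge\sum_j t_j\det(\omega_j)^{1/n},
\]
combined with $\det(\omega_j)/\det(\omega_0)=C_je^{\Psi}$, produces $e^{\Psi(\max)/n}\sum_j t_jC_j^{1/n}\le K\sum_j t_j$; the $\sum_j t_j$ factors cancel and a $t$-uniform upper bound on $\sup\Psi$ follows. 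The lower bound comes from a parallel argument combined with the fact that the normalisation pins $\sup\phi_j$ for $j\ge 2$. The Fano case is treated analogously, with the hypothesised bound on $\|\phi_1\|_{C^0}$ playing the role of the Minkowski step.

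\textbf{Step 2: $C^2$ bound given $C^0$.} Taking $\sqrt{-1}\partial\bar\partial\log$ of (\ref{paracoupledMA}) and substituting $\mathrm{Ric}(\omega_0)=\pm\sum_j\theta_j$ gives
\[
\mathrm{Ric}(\omega_i)=\pm\sum_j(1-t_j)\theta_j\pm\sum_j t_j\omega_j.
\]
The Chern--Lu/Aubin--Yau Laplacian inequality for $\log\mathrm{tr}_{\omega_0}\omega_i$ then contains, through the $-\mathrm{tr}_{\omega_0}\mathrm{Ric}(\omega_i)$ term, cross contributions $\mathrm{tr}_{\omega_0}\omega_j$ with $j\ne i$ that couple the $N$ inequalities. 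To absorb these I will work with the symmetric auxiliary function
\[
u=\sum_i\log\mathrm{tr}_{\omega_0}\omega_i - A\sum_i\phi_i
\]
for $A$ large (depending on a lower bound for the bisectional curvature of $\omega_0$ and on the $C^0$ bound), apply the appropriate linear combination of the Laplacian inequalities at a point where $u$ attains its maximum, and use the elementary inequality $\mathrm{tr}_{\omega_i}\omega_0\cdot(\mathrm{tr}_{\omega_0}\omega_i)^{n-1}\ge n^{n-1}\,\omega_i^n/\omega_0^n$ to dominate the cross traces by the $A\,\Delta_{\omega_i}\phi_i$ contributions. This yields a uniform upper bound on each $\mathrm{tr}_{\omega_0}\omega_i$, hence on $\Delta_{\omega_0}\phi_i$.

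\textbf{Step 3 and the main obstacle.} With $\|\Delta_{\omega_0}\phi_i\|_{C^0}\le C$ in hand, each Monge--Amp\`ere equation is uniformly elliptic and concave in $\sqrt{-1}\partial\bar\partial\phi_i$, and the complex Evans--Krylov theorem (with standard Schauder bootstrapping) delivers the $C^{2,\alpha}$ bound. The principal difficulty throughout is the coupling: the individual potentials cannot be controlled in isolation, so one must bound the combined potential $\Psi$ jointly via the Minkowski argument in Step 1, and choose a symmetric auxiliary function summed over $i$ in Step 2 to absorb the cross trace terms. Tuning the constant $A$ against the bisectional curvature of $\omega_0$ and the previously obtained $C^0$ bound, while simultaneously keeping the cross terms under control in all $N$ inequalities at once, is the key technical point.
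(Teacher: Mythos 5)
Your overall template ($C^0 \to \Delta \to C^{2,\alpha}$) matches the paper's, and your Step 1 upper bound on $\Psi=\sum_j t_j\phi_j$ via the maximum principle and the Minkowski determinant inequality is a legitimate alternative to the paper's route (a positive Green's function representation for $\Psi$ plus Jensen's inequality and the normalisation $\int e^{\Psi}\omega_0^n=\int\omega_0^n$). Note, however, that there is no ``parallel'' Minkowski argument at the minimum --- the determinant inequality has no reverse --- so the lower bounds must come, as in the paper, from the Alexandrov--Bakelmann--Pucci estimate or Kolodziej's stability applied to each equation once $e^{\Psi}$ is bounded above.

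The genuine gap is in Step 2. The symmetric auxiliary function $u=\sum_i\log\mathrm{tr}_{\omega_0}\omega_i-A\sum_i\phi_i$ cannot be run through the maximum principle: each summand $\log\mathrm{tr}_{\omega_0}\omega_i$ is controlled by the Aubin--Yau/Chern--Lu inequality only for its own operator $\Delta_{\omega_i}$, and at a maximum of $u$ the quantities $\Delta_{\omega_i}\log\mathrm{tr}_{\omega_0}\omega_k$ for $k\neq i$ (fourth derivatives of $\phi_k$ traced against $\omega_i$ rather than $\omega_k$) admit no useful lower bound, so no linear combination of the inequalities closes up. Likewise the cross traces $\mathrm{tr}_{\omega_0}\omega_j$, $j\ne i$, are not dominated by $A\Delta_{\omega_i}\phi_i=A(n-\mathrm{tr}_{\omega_i}\theta_i)$: the good term this produces is $A\,\mathrm{tr}_{\omega_i}\theta_i$, which is unrelated to $\mathrm{tr}_{\omega_0}\omega_j$, while adding $-A\phi_j$ to the auxiliary function introduces the new uncontrolled term $-A\,\mathrm{tr}_{\omega_i}\omega_j$. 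The paper's mechanism is different and is the key point of the proof: it keeps a separate Yau-type quantity $u_i=e^{-\lambda\phi_i}(n+\Delta_{\theta_i}\phi_i)$ for each $i$, applies $\Delta_{\theta_{\phi_i}}$ to $u_i$ at its own maximum point, and observes that (i) in the anti-Fano case the cross term $\sum_{j\ne i}t_j\Delta_{\theta_i}\phi_j$ is bounded below simply because $\spbp\phi_j\ge-\theta_j$, so no absorption trick is needed, and (ii) in the Fano case the cross terms are bounded by $C\sum_{j\neq i}t_ju_j$, yielding the system of algebraic inequalities $M_i^{n/(n-1)}\le C(1+\sum_jt_jM_j)$ for the maxima $M_i=\max u_i$, which is closed by summing over $i$ and invoking Young's inequality $a\le\epsilon a^{n/(n-1)}+C(n,\epsilon)$. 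Without this (or an equivalent) device your Laplacian estimate does not go through; your Step 3 is then essentially the paper's (which cites Wang's $C^{2,\alpha}$ estimate rather than Evans--Krylov, but this is immaterial).
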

Note that at $t_i=0\ \ \forall \ i$, the functions $\phi_i=0$ solve the equations. By theorem \ref{estimatesthm} the set of $t_i$ for which there exists a solution is closed for anti-Fano manifolds. Theorem \ref{mainthm} follows from the following openness result.
\begin{theorem}
The set of $0\leq t < 1$ for which there exists a unique smooth solution to the following system is open.
\begin{gather}
\mathrm{Ric}(\theta_{1\phi_1}) = \mathrm{Ric}(\theta_{2\phi_2}) = \ldots = \pm \left ( \displaystyle \sum t \theta_{i \phi_i} +  \sum (1-t) \theta_i \right )
\label{mainsyscont}
\end{gather}
\label{mainthmopen}
\end{theorem}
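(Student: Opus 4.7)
The plan is to apply the implicit function theorem at a given solution for $t = t_0 \in [0,1)$. First, I rewrite \eqref{mainsyscont} as a Monge--Amp\`ere system: using $\mathrm{Ric}(\omega_0) = \pm\sum_i\theta_i$ and the $\spbp$-lemma, \eqref{mainsyscont} is equivalent to
\[
(\theta_i + \spbp\phi_i)^n \;=\; C_i\, e^{\mp t\sum_j \phi_j}\,\omega_0^n \qquad \text{for each } i.
\]
This system is invariant under the shifts $\phi_i \mapsto \phi_i + c_i$ with $\sum_j c_j = 0$; I kill this symmetry by normalizing $\sup \phi_i = 0$ for $i \geq 2$ and using the integrated $i=1$ equation to fix the additive constant of $\phi_1$. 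I then view the system as $F(\vec\phi, t) = 0$ for a smooth map $F$ between the obvious product H\"older spaces.

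Linearizing $F$ at a solution $\vec\phi_0$, with $\omega_i := \theta_i + \spbp\phi_{0,i}$, produces the operator
\[
(L\vec\psi)_i \;=\; \Delta_{\omega_i}\psi_i \,\mp\, t_0 \sum_j \psi_j .
\]
The critical structural observation is that, at a solution, all the volume forms are proportional to a common measure: namely $\omega_i^n = C_i\,d\mu$ where $d\mu := e^{\mp t_0 \sum_j \phi_{0,j}}\omega_0^n$. Since $\Delta_{\omega_i}$ is self-adjoint with respect to $\omega_i^n$ and $d\mu$ differs from $\omega_i^n$ by the constant $C_i$, it is also self-adjoint with respect to $d\mu$. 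Consequently $L$ is self-adjoint for the inner product $\langle \vec\psi, \vec\chi \rangle := \sum_i \int_X \psi_i \chi_i \, d\mu$, and writing $s := \sum_j \psi_j$, integration by parts yields
\[
\langle \vec\psi, L\vec\psi \rangle \;=\; -\sum_i \int_X |\nabla \psi_i|_{\omega_i}^2 \, d\mu \;\mp\; t_0 \int_X s^2\, d\mu.
\]

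In the anti-Fano case this quadratic form is manifestly nonpositive and vanishes only when every $\psi_i$ is constant and $s = 0$---exactly the shift symmetry that the normalization has already removed. Hence $L$ is injective on the normalized space, and since $L$ is an elliptic system (the principal symbol is block-diagonal with the elliptic symbols of $\Delta_{\omega_i}$) perturbed by a bounded zero-order coupling, it is Fredholm of index zero and therefore an isomorphism. The implicit function theorem then gives the desired openness.

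In the Fano case the sign of the $t_0\int s^2\,d\mu$ term in the quadratic form is reversed, and injectivity of $L$ is no longer automatic; this is the main obstacle. The strategy here is to exploit the strict inequality $\mathrm{Ric}(\omega_i) = t_0 \sum_j \omega_j + (1-t_0)\sum_j \theta_j \geq t_0 \omega_i$ (strict whenever $t_0 < 1$), which should give a Bochner--Lichnerowicz-type lower bound on the spectrum of the coupled operator ruling out $t_0$ as an eigenvalue---an argument parallel to, but technically more delicate than, Aubin's classical openness for the single-metric Fano K\"ahler--Einstein continuity method.
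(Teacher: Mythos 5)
Your anti-Fano argument is essentially the paper's: the paper likewise reduces openness to the injectivity of the linearized operator $L(v_1,v_2,\ldots)=(-\Delta_{\omega_i}v_i+a\sum_j t_j v_j,\ldots)$ via the Fredholm alternative, uses exactly your structural observation that the normalized volume forms $\omega_i^n/\int\omega_i^n$ all coincide (because the $\omega_i$ have equal Ricci curvature), and kills the kernel with the same nonpositive quadratic form, concluding $\sum_j t_j v_j=0$, hence all $v_i$ constant, hence zero by normalization. One technical caveat: the constraint $\sup\phi_i=0$ is not a differentiable condition, so it cannot be used to set up the implicit function theorem directly; replace it by an integral normalization such as $\int_X\psi_i\,\omega_i^n=0$, as the paper does.

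The genuine gap is the Fano case, which is part of the statement (the theorem carries the sign $\pm$) and which you leave as a strategy rather than a proof. The obstruction is concrete: the kernel equation reads $\Delta_{\omega_i}v_i=-t\sum_j v_j$, with the \emph{same} right-hand side for every $i$, so $v_i$ is not an eigenfunction of $\Delta_{\omega_i}$ and the classical ``$\mathrm{Ric}>t\omega$ pushes the first eigenvalue above $t$'' argument does not apply verbatim. The Bochner/Weitzenb\"ock inequality $\int_X(\Delta_{\omega_i}v_i)^2\,dvol\geq\int_X\mathrm{Ric}(\omega_i)(\partial v_i,\bar{\partial}v_i)\,dvol\geq t\sum_j\int_X|\nabla v_i|^2_{\omega_j}\,dvol$ controls the gradient of $v_i$ in the \emph{other} metrics $\omega_j$, whereas the energy identity coming from the kernel equation produces $\int_X|\nabla_i v_i|_i^2\,dvol$; these live in different norms and must be compared. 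The paper bridges this with a pointwise Cauchy--Schwarz estimate in coordinates diagonalizing $\omega_i$ and $\omega_j$ simultaneously, namely $\langle\nabla_j v_j,\nabla_j v_i\rangle_j\leq|\nabla_j v_j|_i\,|\nabla_i v_i|_i$, which combined with the energy identity gives $\Vert\nabla_j v_j\Vert_i\geq\Vert\nabla_i v_i\Vert_i$, and then a symmetrized sum over all pairs $(i,j)$ forcing equality in every inequality, whence the $v_i$ are constant and vanish. Without this cross-metric comparison (or a substitute), your Fano argument does not close; note also that even with it, equality at $t=1$ only forces the gradients to be proportional holomorphic fields, which is precisely why the statement is restricted to $t<1$.
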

Notice that theorems \ref{estimatesthm} and \ref{mainthmopen} reduce the problem for Fano manifolds to the $C^0$ estimate just as in the usual K\"ahler-Einstein case. In \cite{David} an obstruction to solving the equation akin to K-stability was discovered for Fano manifolds. It is interesting to see if the corresponding $C^0$ estimate can be proven along this continuity path using techniques of \cite{CDS1,CDS2,CDS3,Datar}.\\

\emph{Acknowledgements} : The author thanks David Witt Nystr\"om for answering some questions. The author is especially grateful to Jakob Hultgren for pointing out a crucial error and proof-reading the solution to the same.
\section{\emph{A priori} estimates on solutions to equation \ref{coupledMA}}
\indent As is often the case in fully nonlinear PDE, we prove lower order estimates and improve upon them. In what follows unless clarity demands otherwise, we denote arbitrary uniform (in the time parameters in the method of continuity) constants by $C$.\\
\indent We first prove a $C^0$ estimate in the anti-Fano case. 
\begin{lemma}
If $c_1(X)<0$ then any smooth solution $\phi_i$ satisfying $\sup \phi_2 = \sup \phi_3 =\ldots = 0$ of the system 
\begin{gather}
(\theta_i+\spbp \phi_i) ^n = C_i e^{\displaystyle \sum _i t_i \phi_i} \omega_0^n, 
\label{antifanoeqn}
\end{gather}
where $C_i = \frac{\int \theta_i ^n}{\int \omega_0 ^n}$ and $0\leq t_i \leq 1$ satisfies $\Vert \phi _i \Vert_{C^0} \leq C$.
\end{lemma}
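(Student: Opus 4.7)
My plan is to mimic the standard maximum-principle proof of the $C^0$ estimate for K\"ahler--Einstein metrics in the anti-Fano case, but applied to the \emph{coupling term} rather than to an individual $\phi_i$. The crucial structural observation is that dividing any two equations of (\ref{antifanoeqn}) forces the volume ratios $\omega_i^n/\omega_k^n$ to be constant, so the sum $F := \sum_j t_j \phi_j$ is intrinsically meaningful: $e^F = \omega_i^n/(C_i \omega_0^n)$ for every $i$, where $\omega_i = \theta_i + \spbp\phi_i$. I would first bound $F$ in $L^\infty$ and then extract bounds on the individual $\phi_i$.

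For the bound on $F$, apply the maximum principle to $F$ itself. At a point $p$ where $F$ attains its maximum, $\spbp F(p) \le 0$ gives the Hermitian-form inequality $\sum_i t_i\,\omega_i(p) \le \sum_i t_i\,\theta_i(p)$. Tracing with $\omega_0$, the right-hand side is uniformly bounded; on the left, AM-GM on the eigenvalues of $\omega_i$ with respect to $\omega_0$ yields $\operatorname{tr}_{\omega_0}\omega_i \ge n(\omega_i^n/\omega_0^n)^{1/n} = n\,C_i^{1/n}\,e^{F/n}$. Summing with weights $t_i$:
\[ n\Bigl(\sum_i t_i\,C_i^{1/n}\Bigr)\,e^{F(p)/n}\; \le\; C, \]
so $F \le C$ globally provided $\sum t_i$ is bounded away from zero (the degenerate $t_i \equiv 0$ case being the trivial $\phi_i \equiv 0$).

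With $F$ bounded above, each $\phi_j$ ($j \ge 2$) satisfies a complex Monge--Amp\`ere equation whose right-hand side $C_j e^F \omega_0^n$ lies in $L^\infty$. Together with the normalization $\sup \phi_j = 0$, Kolodziej's $L^\infty$ estimate (or, equivalently, Yau's Moser iteration) yields $\Vert\phi_j\Vert_{L^\infty} \le C$ for $j \ge 2$. For $\phi_1$, the decomposition $t_1 \phi_1 = F - \sum_{j \ge 2} t_j \phi_j$ shows $t_1 \phi_1$ is bounded above; for the matching lower bound, apply the minimum principle directly to $\phi_1$, where $\spbp\phi_1 \ge 0$ forces $\omega_1^n \ge \theta_1^n$ and hence $e^F \ge c$ at that point, giving $t_1 \phi_1 \ge -C$. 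Thus $\Vert\phi_1\Vert_{L^\infty}$ is controlled as long as $t_1$ is bounded away from zero, which suffices for the continuity method.

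The main obstacle is the coupling itself: a maximum-principle argument applied to a single $\phi_i$ only controls $F$ at the extremum of $\phi_i$, never $\phi_i$ individually, so the circular system of inequalities one naively extracts cannot be closed. The trick that unlocks the problem is to recognise $F$ as a bona-fide function on $X$ (forced by the rigid determinant ratios among the equations), apply the max principle to $F$, and use AM-GM on the eigenvalues to convert the resulting Hermitian-form inequality into a pointwise $L^\infty$ bound on $e^F$. Once this coupling bound is in hand, the remaining estimates are routine pluripotential theory applied equation by equation.
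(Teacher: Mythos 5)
Your proof is correct, but the key step---the upper bound on the coupling sum $F=\sum_i t_i\phi_i$---is obtained by a genuinely different argument from the paper's. The paper integrates the equation to get the normalisation $\int_X e^{F}\omega_0^n=\int_X\omega_0^n$, and then combines the Green's function representation formula for $F$ (which satisfies $\spbp F\geq -C\omega_0$) with Jensen's inequality to conclude $F\leq C$. You instead evaluate at a maximum point of $F$, where $\sum_i t_i\omega_i\leq\sum_i t_i\theta_i$, and convert this into a pointwise bound on $e^{F}$ via the arithmetic--geometric mean inequality applied to the eigenvalues of $\omega_i$ relative to $\omega_0$. Your route is more elementary (purely local, no Green's function or integral normalisation needed), but it costs a factor: your bound reads $e^{F/n}\leq C/\bigl(\sum_i t_iC_i^{1/n}\bigr)$ and so degenerates as $\sum_i t_i\to 0$, whereas the paper's bound $F\leq C$ is uniform in the $t_i$. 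The loss is harmless here, since both proofs must in any case divide by $t_1$ to pass from a bound on $t_1\phi_1$ to a bound on $\phi_1$ (and, as you note, along the continuity path one only needs estimates for $t$ bounded away from $0$; the lemma is in fact only meaningful under such a restriction, a caveat the paper shares). The remaining steps---Kolodziej/ABP applied equation by equation to bound $\phi_j$, $j\geq 2$, once $e^F$ is controlled, and the minimum principle on $\phi_1$ for the lower bound---coincide with the paper's.
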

\begin{proof}
 If $\vert \phi_1 \vert_{C^0} \leq C$ then by the assumption that $\phi_i \leq 0 \ \forall \ i \geq 2$, and either the Alexandrov-Bakelmann-Pucci (ABP) maximum principle \cite{Blocki} or $L^p$ stability for $p>1$ \cite{Kolo} we can see that $\Vert \phi_i \Vert_{C^0} \leq C$ for all $1\leq i\leq n$. In addition, the maximum principle shows that $\phi_1 \geq -C$. So we just need to prove that $\phi _1 \leq C$. \\
\indent Choosing a positive Green's function $G$ for the Laplacian of $\omega_0$, we see using the representation formula (page 49 in \cite{Tian} for instance) that
\begin{gather}
u(x)-C \leq \frac{\int u \omega_0 ^n}{V},
\end{gather}
for every $u$ satisfying $\spbp u \geq -C \omega_0$, where $V$ is the volume of $\omega_0$. Taking $u=\displaystyle \sum_ i t_i \phi_i$ and using Jensen's inequality we get
\begin{gather}
\displaystyle \sum _i t_i \phi_i (x) - C \leq \ln \left ( \int e^{\sum t_i \phi_i} \omega_0 ^n \right ) \nonumber \\
\Rightarrow \displaystyle \sum _i t_i \phi_i (x) \leq C. 
\end{gather}	
Therefore, $\Vert e^{\sum t_i \phi_i} \Vert _{L^p} \leq C_p$ for all $p>1$. Thus by the the ABP estimate as before we see that $-C \leq \phi_i \leq C$. 
\end{proof}
\indent We proceed to prove a bound on the Laplacian in both, the Fano, and the anti-Fano cases.
\begin{lemma}
Any smooth solution $\phi_i$ of the system \ref{coupledMA} satisfies $\Vert \Delta \phi _i \Vert \leq C$.
\end{lemma}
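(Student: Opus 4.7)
The approach will be a coupled adaptation of Yau's classical second-order estimate. Fix an index $i$ and regard the $i$-th Monge--Amp\`ere equation as $\omega_i^n = e^{F_i}\omega_0^n$ with $\omega_i = \theta_i + \spbp\phi_i$ and $F_i = \log C_i \mp \sum_j t_j \phi_j$. The standard Aubin--Yau inequality yields
$$\Delta_{\omega_i}\log \Tr_{\omega_0}\omega_i \geq \frac{\Delta_{\omega_0} F_i}{\Tr_{\omega_0}\omega_i} - B\, \Tr_{\omega_i}\omega_0,$$
where $B$ depends only on a lower bound for the bisectional curvature of $\omega_0$, and where by direct computation $\Delta_{\omega_0}F_i = \mp\sum_j t_j(\Tr_{\omega_0}\omega_j - \Tr_{\omega_0}\theta_j)$. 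The plan is to apply the maximum principle to the standard test function $H_i = \log \Tr_{\omega_0}\omega_i - A\phi_i$ with $A$ chosen sufficiently large. Using $\Delta_{\omega_i}\phi_i = n - \Tr_{\omega_i}\theta_i$ and the uniform equivalence of $\theta_i$ and $\omega_0$, the $A\Tr_{\omega_i}\theta_i$ contribution absorbs the $B\Tr_{\omega_i}\omega_0$ term, so at a maximum of $H_i$ one obtains an upper bound on $\Tr_{\omega_i}\omega_0$ provided $\Delta_{\omega_0}F_i / \Tr_{\omega_0}\omega_i$ is bounded from below. The AM--GM lower bound $\Tr_{\omega_0}\omega_i \geq n(\omega_i^n/\omega_0^n)^{1/n}$, together with the bounded volume ratio coming from the $C^0$ bounds on the $\phi_j$'s, then promotes the bound on $\Tr_{\omega_i}\omega_0$ to a bound on the individual eigenvalues and hence on $\Tr_{\omega_0}\omega_i$ at the maximum; the $C^0$ bound on $\phi_i$ propagates $H_i \leq C$ to all of $X$, delivering $\Vert \Delta \phi_i \Vert_{C^0} \leq C$.

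In the anti-Fano case the sign is favourable: $\Delta_{\omega_0}F_i = \sum_j t_j \Tr_{\omega_0}\omega_j - \sum_j t_j\Tr_{\omega_0}\theta_j \geq -C$, because $\Tr_{\omega_0}\omega_j \geq 0$ and the $\theta_j$ are fixed smooth forms. So the argument sketched above closes immediately.

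In the Fano case the sign flips: $\Delta_{\omega_0}F_i = -\sum_j t_j\Tr_{\omega_0}\omega_j + \text{(bounded)}$, which is bounded above but not below, and the cross terms $-\sum_{j\neq i} t_j\Tr_{\omega_0}\omega_j$ are the main obstacle since they couple the Laplacians of the different $\phi_j$'s. To handle this I would apply the maximum principle not to any single $H_i$ but to $H_\ast := \max_i H_i$. At its maximum, attained at some $(x_0,i_0)$, the inequality $H_j(x_0)\leq H_{i_0}(x_0)$ combined with the $C^0$ bounds on each $\phi_j$ forces $\Tr_{\omega_0}\omega_j(x_0) \leq C' \Tr_{\omega_0}\omega_{i_0}(x_0)$ for every $j$, which gives $\Delta_{\omega_0}F_{i_0}(x_0)/\Tr_{\omega_0}\omega_{i_0}(x_0) \geq -C$. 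Since $H_{i_0}$ has a local maximum at $x_0$, the max-principle argument from the first paragraph now runs verbatim at $x_0$, bounds $\Tr_{\omega_{i_0}}\omega_0(x_0)$, and hence bounds $H_\ast(x_0)$. Combined with the $C^0$ bounds on each $\phi_i$ this yields the required Laplacian estimate in both cases.
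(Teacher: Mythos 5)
Your proof is correct, but in the Fano case it takes a genuinely different route from the paper. The paper works with Yau's weighted quantity $u_i=e^{-\lambda\phi_i}(n+\Delta_{\theta_i}\phi_i)$ and the refined inequality of Chen--He, whose key feature is the superlinear term $\tilde C u_i^{n/(n-1)}$: applying the maximum principle to each $u_i$ separately yields the system of algebraic inequalities $M_i^{n/(n-1)}\leq C(1+\sum_j t_j M_j)$ for the maxima $M_i$, and the coupling is then beaten by summing over $i$ and invoking Young's inequality, the exponent $n/(n-1)>1$ being exactly what absorbs the linear right-hand side. You instead use the plain log-trace (Aubin--Yau/Chern--Lu) inequality for $H_i=\log\Tr_{\omega_0}\omega_i-A\phi_i$ and resolve the coupling by a vector-valued maximum principle: at the maximum of $\max_i H_i$, attained at $(x_0,i_0)$, the inequalities $H_j(x_0)\leq H_{i_0}(x_0)$ together with the $C^0$ bounds give $\Tr_{\omega_0}\omega_j(x_0)\leq e^{2A\Vert\phi\Vert_{C^0}}\Tr_{\omega_0}\omega_{i_0}(x_0)$, so the offending cross terms in $\Delta_{\omega_0}F_{i_0}$ are dominated by the very quantity sitting in the denominator, and the single-metric argument closes at $x_0$. (The anti-Fano case is parallel in both treatments: the sign of the coupling is favourable and a direct maximum principle suffices.) Your approach is more elementary in that it needs neither the $(n+\Delta v)^{n/(n-1)}$ term nor the Young's-inequality bookkeeping, at the price of the slightly less standard step of applying the maximum principle to a maximum of finitely many smooth functions (which is legitimate, since $H_{i_0}$ itself attains its global maximum at $x_0$); the paper's version keeps each $u_i$ at its own maximum point and settles the coupling purely at the level of the numbers $M_i$. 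Both arguments use the $C^0$ bounds on all the $\phi_j$ in an essential way, consistent with the statement of Theorem \ref{estimatesthm}. One small point of wording: the promotion from a bound on $\Tr_{\omega_{i_0}}\omega_0(x_0)$ to one on $\Tr_{\omega_0}\omega_{i_0}(x_0)$ uses the \emph{upper} bound on the volume ratio $\omega_{i_0}^n/\omega_0^n$ (eigenvalues bounded below by the reverse trace bound, then above by the determinant), while the AM--GM lower bound on $\Tr_{\omega_0}\omega_i$ is what you need to control the term $\Delta_{\omega_0}F_i/\Tr_{\omega_0}\omega_i$ from below; both ingredients are present in your sketch, just attached to slightly the wrong steps.
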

\begin{proof}
Let $u_i  = e^{-\lambda \phi_i} (n+\Delta_{\theta_i} \phi_i)$. We shall assume that $\Vert \phi_i \Vert_{C^0} \leq  C$ in what follows. Just as in Yau's proof \cite{Yau} we write the following inequality (inequality 2.3 from \cite{Chen} for instance) for solutions of $\omega_v ^n= (\omega+\spbp v) ^n =e^{F-\lambda v} \omega^n$
\begin{gather}
\Delta_{\omega_v} (\exp(-C_1 v)(n+\Delta v)) \geq \exp(-C_1 v) \left [ \Delta F - C_2- C_1(n+\Delta v) \right ] \nonumber \\+ \exp \left (-C_3 v -\frac{F}{n-1} \right )C_4 (n+\Delta v)^{n/(n-1)}.
\end{gather} 
Replacing $C_1$ by $\lambda$, $F$ by $F+a\displaystyle \sum _{i \neq j}t_j \phi_j$, $\omega$ by $\theta_i$, and $v$ by $\phi_i$ in the above inequality we get (after a couple of easy estimates) the following. Note that $a = 1$ or $a=-1$ depending on whether the manifold is anti-Fano or Fano respectively.
\begin{gather}
\Delta_{\theta_{\phi_i}} u_i\geq -C+ \tilde{C} u_i ^{n/(n-1)}+ \displaystyle a\sum_{i\neq j}t_j e^{-\lambda \phi_i}\Delta _{\theta_i} \phi_ j \nonumber \\
= -C+ \tilde{C} u_i ^{n/(n-1)}+ \displaystyle a\sum_{i\neq j} e^{-\lambda \phi_i} t_j \frac{\theta_i ^{n-1} \wedge \spbp \phi_ j}{\theta_i ^n}. 
\label{inter}
\end{gather}
At this point we analyse the two cases $a= \pm 1$ separately.
\begin{enumerate}
\item $a=1$. In this case we may continue inequality \ref{inter} further as follows.
\begin{gather}
\Delta_{\theta_{\phi_i}} u_i \geq -C+ \tilde{C} u_i ^{n/(n-1)}- \displaystyle \sum_{i\neq j} t_j e^{-\lambda \phi} \frac{\theta_i ^{n-1}  \theta_ j}{\theta_i ^n}. \nonumber
\end{gather}
Therefore by the maximum principle $u_i \leq C \ \forall \ 1\leq i \leq n$.
\item In this case we have the following consequence of inequality \ref{inter}.
\begin{gather}
\Delta_{\theta_{\phi_i}} u_i\geq -C+ \tilde{C} u_i ^{n/(n-1)}-C \displaystyle \sum_{i\neq j} e^{-\lambda \phi_i} t_j \frac{\theta_j ^{n-1} \wedge \spbp \phi_ j}{\theta_j ^n} \nonumber \\
\geq -C + \tilde{C} u_i ^{n/(n-1)} -C \displaystyle \sum_{i\neq j} t_j u_j. 
\label{intertwo}
\end{gather}
Let $\displaystyle \max _X u_i = M_i$. By the maximum principle and inequality \ref{intertwo} we see that for every $i$ we have the following inequality.
\begin{gather}
C\left(1+\sum t_j M_j \right) \geq M_i ^{n/(n-1)}. 
\label{interthree} 
\end{gather}
Summing \ref{interthree} over all $i$ and using Young's inequality $a \leq \epsilon a^{n/(n-1)} + C(n,\epsilon)$ we get (upon choosing a small enough $\epsilon$),
\begin{gather}
C\left(1+\epsilon \sum  M_j^{n/(n-1)} + \sum C(n,\epsilon)  \right) \geq \sum M_i ^{n/(n-1)} \nonumber \\
\Rightarrow M_i \leq C \ \forall \ 1 \leq i \leq n.
\end{gather}
\end{enumerate}
 
\end{proof}
\indent Finally, we need a $C^{2,\alpha}$ estimate in order to complete the proof of theorem \ref{estimatesthm}. Indeed, theorem 1.1 of \cite{Wang} implies the desired $C^{2,\alpha}$ estimate provided $\Vert \phi_i \Vert_{C^1} \leq C$. The latter inequality is true because of the Laplacian bound and $W^{2,p}$ elliptic regularity. We also note that standard elliptic theory (Schauder estimates) and bootstrapping imply that $\Vert \phi _i \Vert_{C^{k,\alpha}} \leq C$ for any $k$.

\section{Uniqueness in the anti-Fano case and openness along the continuity path}
\indent The uniqueness part of theorem \ref{mainthm} was proven in \cite{David} but we prove it again for the convenience of the reader. 
\begin{proposition}
Let $X$ be an anti-Fano manifold. If a solution to the coupled K\"ahler-Einstein equations exists, then it is unique.
\end{proposition}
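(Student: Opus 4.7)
The plan is a direct integration-by-parts argument mirroring the standard uniqueness proof for ordinary K\"ahler-Einstein metrics on anti-Fano manifolds. Let $\phi_i$ and $\psi_i$ be two smooth solutions, both subject to the supremum normalization $\sup \phi_j = \sup \psi_j = 0$ for $j \geq 2$, and set $v_i := \phi_i - \psi_i$.

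First I would exploit the telescoping identity $\theta_{i\phi_i}^n - \theta_{i\psi_i}^n = \spbp v_i \wedge T_i$, where $T_i := \sum_{k=0}^{n-1} \theta_{i\phi_i}^k \wedge \theta_{i\psi_i}^{n-1-k}$ is a closed and strictly positive $(n-1,n-1)$-form. Multiplying by $v_i$, integrating, and integrating by parts (legitimate because $T_i$ is closed) gives
\[
\int_X v_i \left(\theta_{i\phi_i}^n - \theta_{i\psi_i}^n\right) = -\int_X \sqrt{-1}\,\partial v_i \wedge \bar\partial v_i \wedge T_i \leq 0,
\]
with equality if and only if $v_i$ is constant.

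Next I would substitute the Monge-Amp\`ere equation into the left-hand side, divide by $C_i > 0$, and sum over $i$. Writing $f := \sum_j \phi_j$ and $g := \sum_j \psi_j$, this produces
\[
\int_X (f - g)\left(e^f - e^g\right)\omega_0^n \leq 0.
\]
The integrand is pointwise nonnegative by monotonicity of $\exp$, so the integral must vanish, forcing $f \equiv g$. Plugging this back, each of the (individually non-positive) summands from the previous display must actually equal zero, so the equality clause identifies each $v_i$ with a constant $c_i$; the relation $\sum_i c_i = 0$ combined with the supremum normalization for $j \geq 2$ forces $c_j = 0$ for $j \geq 2$, and hence $c_1 = 0$ as well, proving $\phi_i = \psi_i$.

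The only subtle point is the sign bookkeeping: the argument closes precisely because the anti-Fano convention puts $+\sum_j \phi_j$ in the exponent, making $(f-g)(e^f - e^g) \geq 0$, which is the opposite sign from the integration-by-parts bound and thereby squeezes both quantities to zero. In the Fano case the exponent flips to $-\sum_j \phi_j$ and both inequalities point the same way, extracting no information --- in harmony with the well-known failure of uniqueness for Fano K\"ahler-Einstein metrics in the presence of holomorphic automorphisms.
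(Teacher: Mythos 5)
Your proof is correct and follows essentially the same route as the paper: subtract the two Monge--Amp\`ere equations, write the difference of the $n$-th powers as $\spbp v_i$ contracted against a closed positive $(n-1,n-1)$-form (the paper's operator $L_j$), multiply by $v_i$, integrate by parts, sum over $i$, and use the monotonicity of $\exp$ to force both sides to vanish. Your treatment of the final normalization step (constants summing to zero plus the $\sup$ normalization) is in fact slightly more explicit than the paper's.
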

\begin{proof}
 If $\phi_{i} ^{'} = \phi_i +u ^{(i)}$ is another solution of \ref{coupledMA} then upon subtraction we get
\begin{gather}
L_j^{ab} u^{(j)}_{a\bar{b}} = e^{\sum \phi_i}(e^{\sum u ^{(i)}}-1),
\label{sub}
\end{gather}
where $L_j^{ab} u^{(j)}_{a\bar{b}} = \frac{(\theta_j + \spbp \phi_j + \spbp u^{(j)})^n-(\theta_j+\spbp \phi_j) ^n}{\omega_0 ^n}$. Note that $L_j$ is a positive-definite matrix. Multiplying \ref{sub} by $u^{(j)}$, integrating-by-parts, and summing over $j$ we see that
\begin{gather}
\displaystyle (\sum_j u ^{ (j)} )e^{\sum \phi_i}(e^{\sum u ^{(i)}}-1) \leq 0. 
\end{gather}
This means that $\sum u^{(j)} = 0$ and $\partial u^{(j)} = \bar{\partial} u^{(j)} = 0$. Therefore $u^{(j)} =0 \ \forall \ j$. 
\end{proof}
\indent Now we proceed to prove openness, i.e., theorem \ref{mainthmopen}.\\

\emph{Proof of theorem \ref{mainthmopen}} : Suppose we know that $\omega_i \in [\theta_i]$ solve the system \ref{mainsyscont} for $t$. Then we need to prove that for $t + \delta$ where $\delta$ is in a small open interval, the system can still be solved. We shall in fact consider $t_i$ to be potentially different for different $i$ until the very end of this proof. This is because for the anti-Fano case, one can prove a slightly more general result than the one stated in theorem \ref{mainthmopen}. To this end define the following Banach manifolds.
\begin{definition}
Let $\mathcal{B}^i_1$ be the open subset of $C^{4,\alpha}$ functions $\psi_i$ satisfying $$\displaystyle \int _X \psi_i \omega_i ^n = 0$$ and $$\omega_i + \spbp \psi_i >0.$$ \\
Let $\mathcal{B} _2$ be the subspace of $C^{0,\alpha}$ real $(1,1)$-forms $\eta$ of the form $$\eta = \spbp f$$ where $f$ is a $C^{2,\alpha}$ function satisfying $$\displaystyle \int _X f \omega_0 ^n = 0.$$
\end{definition}
Notice that we have the map $T: U=\Pi _{i=1}^k (\mathcal{B}^i _1 \times [0,1]) \rightarrow V= \mathcal{B} _2 ^k$ given by $$T(\psi_1,t_1,\psi_2,t_2,\ldots) = \left(\mathrm{Ric}(\omega_{1\psi_1}) + a \left (\sum t_i \omega_{i\psi_i}+\sum(1-t_i)\theta_i \right), \ldots \right),$$ where $a=\pm 1$ depending on the sign of $-c_1(X)$. Suppose we take a point $p=(0,t_1,0,t_2,\ldots)$ such that $T(p)=0$. The implicit function theorem states that if $DT_p (v_1,0,v_2,0,\ldots)$ is an isomorphism from $TU$ to $TV$, then $\psi_i$ can be locally solved for in terms of $t_j$ and therefore the set of $t_j$ for which $T=0$ is open. The derivative $DT_p$ is 
\begin{gather}
DT_p (v_1,0,v_2,0,\ldots) = (-\spbp \Delta_{\omega_i}v_1 +a \sum t_i \spbp v_i, \ldots).  
\label{deriv}
\end{gather}
For it to be surjective we need to solve
\begin{gather}
 (-\spbp \Delta_{\omega_i}v_1 +a \sum t_i \spbp v_i, \ldots) = (\spbp f_1, \spbp f_2, \ldots) \nonumber \\
\Rightarrow L(v_1, v_2,\ldots) =  (-\Delta   _{\omega_i}v_1 +a \sum t_i v_i, \ldots) =(f_1, f_2, \ldots).
\label{surj}
\end{gather}
By the Fredholm alternative we simply need to prove that the kernel of $L$ is trivial.  The kernel consists of functions such that
\begin{gather}
\Delta   _{\omega_1}v_1 = a \sum t_i v_i \nonumber \\
\Delta   _{\omega_2}v_2 = a \sum t_i v_i \nonumber \\
\vdots 
\label{ker}
\end{gather}
Note that at $t_i=0 \  \forall \ i $ we see that the kernel is obviously trivial and thus openness holds for small $t_i$. Therefore we may assume without loss of generality that $t_i >0 \ \forall \ i$.  We observe that the normalised volume forms $\frac{\omega_i ^n}{\int \omega_i ^n}$ are all equal (to some form $dvol$) because the Ricci curvatures of $\omega_i$ are equal. Multiplying the $j^{th}$ equation of \ref{ker} by $t_i v_i dvol$ and integrating the left-hand side by parts
\begin{gather}
-\displaystyle \int_X  t_i  \langle \nabla_j v_j , \nabla _j v_i \rangle_j dvol = a \int _X   t_i v_i \sum _k t_k v_k dvol.
\label{usef}
\end{gather}
Taking $i=j$ and summing over all $j$ we get
\begin{gather}
-\displaystyle \int_X  \sum_j t_j \vert \nabla_j v_j \vert_j ^2 dvol = a \int _X    \left(\sum _k t_k v_k \right )^2 dvol.
\label{usefu}
\end{gather}
There are two cases to consider. 
\begin{enumerate}
\item \emph{$X$ is anti-Fano, i.e. $a=1$} : 
Equation \ref{usefu} implies that $\sum t_i v_i = 0$. Therefore by equations \ref{ker} all the $v_i$ are constants and in fact equal to $0$ (because $\int v_i \theta_i ^n = 0$).
\item \emph{$X$ is Fano, i.e., $a=-1$} : A Weitzenb\"ock identity (see page 65 of \cite{Tian} for instance) that
\begin{gather}
\displaystyle \int _X (\Delta _{\omega_i} v_i)^2 dvol \geq \int _X \mathrm{Ric}(\omega_i) (\partial v_i, \bar{\partial} v_i) dvol \nonumber \\
\geq  \displaystyle  \int _X \sum_j t_j  \vert \nabla_i v_i \vert_j ^2 dvol.
\label{prec}
\end{gather}
Assume without loss of generality that none of the $v_i$ are constant. Indeed, if let's say $v_1$ is a constant, then $\Delta_1 v_1=\Delta_i v_i=0$ which by the maximum principle means that all the $v_i$ are constant and in fact $0$ by normalisation. Note that \ref{usef} implies that
\begin{gather}
\displaystyle \int _X \langle \nabla_j v_j , \nabla_j v_i \rangle _j dvol = \int_X \vert \nabla_i v_i \vert_i^2 dvol. 
\label{useftwo}
\end{gather}
Choose normal coordinates for $\omega_i$ at a point $p$.  Further, assume that $\omega_j$ is diagonal at $p$ with eigenvalues $\lambda_{\mu}$. Writing the integrand of the left hand side of \ref{useftwo} at $p$ in the said coordinates we get the following.
\begin{gather}
\langle \nabla_j v_j , \nabla_j v_i \rangle _j (p)= \displaystyle \sum_{\mu} \frac{\partial_{\mu}v_j\bar{\partial}_{\mu}v_i}{\lambda_{\mu}} \nonumber \\
\leq \displaystyle \sqrt{\sum \frac{\vert\partial_{\mu}v_j \vert^2}{\lambda_{\mu}^2} }\sqrt{\sum \vert \partial_{\mu} v_i \vert^2}\nonumber\\
=\vert \nabla_j v_j\vert_i \vert \nabla_i v_i \vert_i.
\label{usefthree}
\end{gather}
Thus using \ref{useftwo}, \ref{usefthree}, and the Cauchy-Schwarz inequality we get
\begin{gather}
\Vert \nabla_j v_j\Vert_i \Vert \nabla_i v_i\Vert_i \geq  \Vert \nabla_i v_i\Vert_i^2 \nonumber \\
\Rightarrow \Vert \nabla_j v_j\Vert_i \geq \Vert \nabla_i v_i\Vert_i
\label{usecs} 
\end{gather}
 At this point we put $t_i=t_j=t$ in \ref{prec}, summing over $i$ and $j$, and using \ref{ker} and \ref{usefu} we get,
\begin{gather}
\displaystyle \sum_{i,j} \int_X \left (\vert \nabla_j v_j \vert_j ^2- \vert\nabla_i v_i \vert_j ^2\right ) dvol \geq 0 \nonumber \\
\Rightarrow \displaystyle \sum_{i<j} \int_X \left (\vert \nabla_j v_j \vert_j^2 + \vert \nabla_i v_i \vert_i ^2- \vert \nabla_i v_i \vert_j ^2 - \vert \nabla_j v_j \vert_i ^2 \right ) dvol \geq 0.
\label{usefour}
\end{gather}
Equation \ref{usefour} in conjunction with \ref{usecs} implied that equality holds in all the inequalities above. Therefore all the $v_i$ are constants and in fact $0$ by normalisation. Note that this may not be true for $t=1$ because equality holding in the inequalities would merely mean that $\nabla_j v_i$ are holomorphic vector fields proportional to each other.
\end{enumerate}
\qed \\

\end{document}